\pgfplotsset{compat=1.13}
\newtheorem{theorem}{Theorem}
\newtheorem{proposition}{Proposition}[section]
\theoremstyle{remark}
\newtheorem*{remark}{Remark}
\theoremstyle{definition}
\newtheorem*{definition}{Definition}
\renewcommand{\MR}[1]{}
\DeclareSymbolFont{cmcal}{OMS}{cmsy}{m}{n}
\DeclareSymbolFontAlphabet{\mathcal}{cmcal}
\newcommand{\R}[0]{\mathbb{R}}
\newcommand{\Z}[0]{\mathbb{Z}}
\renewcommand{\P}[0]{\mathbb{P}}
\newcommand{\E}[0]{\mathbb{E}}
\newcommand{\V}[0]{\mathbb{V}}
\newcommand{\tree}[0]{T}
\DeclarePairedDelimiter{\abs}{\lvert}{\rvert}
\DeclarePairedDelimiter{\iverson}{\llbracket}{\rrbracket}
\title[Iterative Leaf Cutting]{Reducing Simply Generated Trees by Iterative Leaf Cutting}
\author[B.~Hackl]{Benjamin Hackl}
\author[C.~Heuberger]{Clemens Heuberger}
\address[Benjamin Hackl, Clemens Heuberger]
{Institut f\"ur Mathematik,
  Alpen-Adria-Uni\-ver\-si\-t\"at Klagenfurt, Universit\"atsstra\ss e
  65--67, 9020 Klagenfurt, Austria}
\email{\href{mailto:benjamin.hackl@aau.at}{benjamin.hackl@aau.at}}
\email{\href{mailto:clemens.heuberger@aau.at}{clemens.heuberger@aau.at}}
\thanks{B.~Hackl and C.~Heuberger are supported by the Austrian
  Science Fund (FWF): P~28466-N35.}
\author[S.~Wagner]{Stephan Wagner}
\address[Stephan Wagner]{
  Department of Mathematical Sciences, Stellenbosch University,
  7602 Stellenbosch, South Africa}
\email{\href{mailto:swagner@sun.ac.za}{swagner@sun.ac.za}}
\thanks{S.~Wagner is supported by the National Research Foundation of South Africa, grant 96236.}
\keywords{Tree reduction, simply generated tree family, additive tree parameter, 
generating function, central limit theorem}
\subjclass[2010]{05A15; 05A16, 05C05, 60C05}
\begin{document}

\begin{abstract}
  We consider a procedure to reduce simply generated trees by iteratively removing
  all leaves. In the context of this reduction, we study the number of vertices that are
  deleted after applying this procedure a fixed number of times by using an additive tree
  parameter model combined with a recursive characterization.

  Our results include asymptotic formulas for mean and variance of this quantity as well
  as a central limit theorem.
\end{abstract}
\maketitle

\section{Introduction}\label{sec:introduction}

Trees are one of the most fundamental combinatorial structures with a plethora of
applications not only in mathematics, but also in, e.g., computer science or biology. A
matter of recent interest in the study of trees is the question of how a given tree family behaves
when applying a fixed number of iterations of some given deterministic reduction procedure
to it. See~\cite{Hackl-Heuberger-Kropf-Prodinger:ta:treereductions,
  Hackl-Prodinger:ta:catalan-stanley} for the study of different reduction procedures on
(classes of) plane trees, and \cite{Hackl-Heuberger-Prodinger:2018:register-reduction} for
a reduction procedure acting on binary trees related to the register function.

In the scope of this extended abstract we focus on the, in a sense, most natural reduction
procedure: we reduce a given rooted tree by cutting off all leaves so that only internal
nodes remain. This process is illustrated in Figure~\ref{fig:leaf-reduction}. While in
this extended abstract we are mainly interested in the family of simply generated trees,
further families of rooted trees will be investigated in the full version.

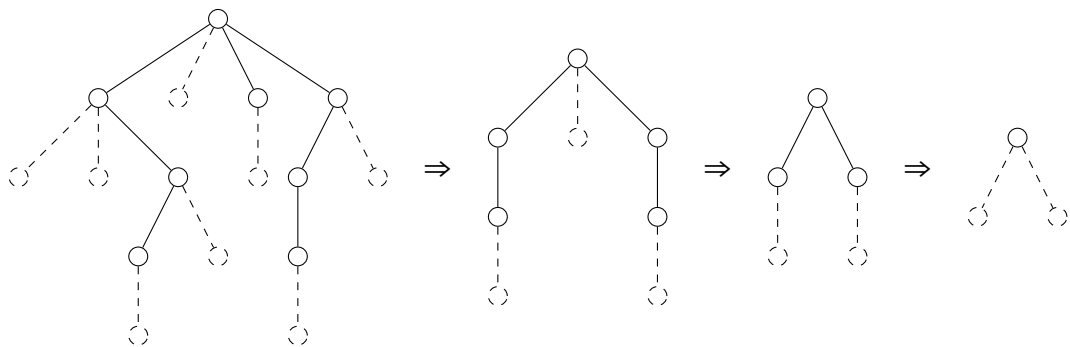
\begin{figure}[ht]
  \centering
  \begin{tikzpicture}[scale=0.7, baseline=(current bounding box.center), inner sep=2.5pt]
    \node[draw, circle] {}
    child {node[draw, circle] {}
      child[dashed] {node[draw, circle] {}}
      child[dashed] {node[draw, circle] {}}
      child {node[draw, circle] {}
        child {node[draw, circle] {}
          child[dashed] {node[draw, circle] {}}
        }
        child[dashed] {node[draw, circle] {}}
      }
    }
    child[dashed] {node[draw, circle] {}}
    child {node[draw, circle] {}
      child[dashed] {node[draw, circle] {}}
    }
    child {node[draw, circle] {}
      child {node[draw, circle] {}
        child {node[draw, circle] {}
          child[dashed] {node[draw, circle] {}}
        }
      }
      child[dashed] {node[draw, circle] {}}
    };
  \end{tikzpicture} $\quad\Rightarrow\quad$
  \begin{tikzpicture}[scale=0.7, baseline=(current bounding box.center), inner sep=2.5pt]
    \node[draw, circle] {}
    child {node[draw, circle] {}
      child {node[draw, circle] {}
        child[dashed] {node[draw, circle] {}}
      }
    }
    child[dashed] {node[draw, circle] {}}
    child {node[draw, circle] {}
      child {node[draw, circle] {}
        child[dashed] {node[draw, circle] {}}
      }
    };
  \end{tikzpicture} $\quad\Rightarrow\quad$
  \begin{tikzpicture}[scale=0.7, baseline=(current bounding box.center), inner sep=2.5pt]
    \node[draw, circle] {}
    child {node[draw, circle] {}
      child[dashed] {node[draw, circle] {}}
    }
    child {node[draw, circle] {}
      child[dashed] {node[draw, circle] {}}
    };
  \end{tikzpicture} $\quad\Rightarrow\quad$
  \begin{tikzpicture}[scale=0.7, baseline=(current bounding box.center), inner sep=2.5pt]
    \node[draw, circle] {}
    child[dashed] {node[draw, circle] {}}
    child[dashed] {node[draw, circle] {}};
  \end{tikzpicture}
  \caption{Multiple applications of the ``cutting leaves'' process to
    a given rooted tree}
  \label{fig:leaf-reduction}
\end{figure}

It is easy to see that the number of steps it takes to reduce the tree so
that only the root remains is precisely the height of the tree, i.e., the greatest
distance from the root to a leaf. A more delicate question---the one in the center of this
article---is to ask for a precise analysis of the number of vertices deleted when
applying the ``cutting leaves'' reduction a fixed number of times.

The key concepts behind our analysis are a recursive characterization and bivariate
generating functions. Details on our model are given in Section~\ref{sec:tree-parameter}.
The asymptotic analysis is then carried out in Section~\ref{sec:simply-generated}, with our
main result given in Theorem~\ref{thm:simply-generated}. It includes precise asymptotic
formulas for the mean and variance of the number of removed vertices when applying the
reduction a fixed number of times. Furthermore, we also prove a central limit theorem.

Finally, in Section~\ref{sec:outlook} we give an outlook on the analysis of the ``cutting
leaves'' reduction in the context of other classes of rooted trees. Qualitative results
for these classes are given in Theorem~\ref{thm:outlook:qualitative}. The corresponding
details will be published in the full version of this extended abstract.

The computational aspects in this extended abstract were carried out using the module
for manipulating asymptotic expansions~\cite{Hackl-Heuberger-Krenn:2016:asy-sagemath} 
in the free open-source mathematics software system SageMath~\cite{SageMath:2018:8.2}.
A notebook containing our calculations can be found at
\begin{center}
\url{https://benjamin-hackl.at/publications/iterative-leaf-cutting/}.
\end{center}

\section{Preliminaries}\label{sec:tree-parameter}

So-called \emph{additive tree parameters} play an integral part in our analysis of the
number of removed nodes.

\begin{definition}
A \emph{fringe subtree} of a rooted tree is a subtree that consists of a vertex and all its descendants.
  An \emph{additive tree parameter} is a functional $F$ satisfying a recursion of the form
  \begin{equation}\label{eq:additive-tree-parameter}
    F(\tree) = \sum_{j=1}^{k} F(\tree^{j}) + f(\tree),
  \end{equation}
  where $\tree$ is some rooted tree, $\tree^{1}$, $\tree^{2}$, \ldots, $\tree^{k}$ are the branches of the root
  of $\tree$, i.e., the fringe subtrees rooted at the children of the root of $\tree$, and $f$ is
  a so-called \emph{toll function}.
\end{definition}
There are several recent articles on properties of additive tree parameters, see for
example \cite{Wagner:2015:centr-limit}, \cite{Janson:2016:normality-add-func},
and~\cite{Ralaivaosaona-Wagner:2018:d-ary-increasing}.

It is easy to see that such an additive tree parameter can be computed by
summing the toll function over all fringe subtrees, i.e., if $\tree^{(v)}$ denotes
the fringe subtree rooted at the vertex $v$ of $\tree$, then we have
\[ F(\tree) = \sum_{v\in \tree} f(\tree^{(v)}).  \]
In particular, the parameter is fully determined by specifying the toll function $f$.

Tree parameters play an important role in our analysis because our quantity of
interest---the number of removed vertices when applying the ``cutting leaves'' reduction
$r$ times---can be seen as such a parameter. Let $a_r(\tree)$ denote this parameter for a given rooted
tree $\tree$.

\begin{proposition}\label{prop:toll-function}
  The toll function belonging to $a_r(\tree)$ is given by
  \begin{equation}\label{eq:toll-function}
    f_r(\tree) = \begin{cases}
      1 & \text{ if the height of } \tree \text{ is less than } r,\\
      0 & \text { else.}
    \end{cases}
  \end{equation}
  In other words, if $\mathcal{T}_r$ denotes the family of rooted trees of height less
  than $r$, the toll function can be written in Iverson notation\footnote{The Iverson
    notation, as popularized in~\cite{Graham-Knuth-Patashnik:1994}, is defined as follows:
    $\iverson{\mathrm{expr}}$ evaluates to 1 if $\mathrm{expr}$ is true, and to 0
    otherwise.} as $f_r(\tree) = \iverson{\tree \in \mathcal{T}_r}$.
\end{proposition}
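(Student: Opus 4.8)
The plan is to verify that the parameter $a_r$ obeys the additive-parameter recursion~\eqref{eq:additive-tree-parameter} with $f_r$ from~\eqref{eq:toll-function} playing the role of the toll function; since a toll function determines its additive parameter and conversely, this is exactly what has to be shown. Equivalently --- and this is how I would actually read off $f_r$ --- it suffices to prove
\[ a_r(\tree) = \#\{\, v\in\tree : \text{the height of } \tree^{(v)} \text{ is less than } r \,\} = \sum_{v\in\tree}\iverson{\tree^{(v)}\in\mathcal{T}_r}, \]
because comparing this with the general representation $a_r(\tree) = \sum_{v\in\tree} f_r(\tree^{(v)})$ forces $f_r(\tree) = \iverson{\tree\in\mathcal{T}_r}$. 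Thus everything comes down to deciding, for each vertex $v$ of $\tree$, whether $v$ is removed during the first $r$ rounds of leaf cutting.

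First I would isolate two elementary observations. (1)~A non-root vertex of a rooted tree is a leaf if and only if it has no children, and this property is not affected by passing to any fringe subtree containing the vertex; hence one round of leaf cutting on $\tree$, restricted to the vertices of a fringe subtree $\tree^{(v)}$, acts exactly as one round of leaf cutting performed on $\tree^{(v)}$ in isolation, the only exception being the root of $\tree^{(v)}$, which remains present as long as it still has children. Iterating, the round in which $v$ disappears from $\tree$ equals the round in which the root of $\tree^{(v)}$ disappears from $\tree^{(v)}$, which is a function of $\tree^{(v)}$ alone. (2)~The root of a tree of height $h$ is removed in round $h+1$ and not before: as recalled in the introduction, after $h$ rounds the tree has shrunk to its root alone, which then has no children and so is removed in round $h+1$, whereas before that round it still has descendants and hence is not a leaf. (If a self-contained argument is wanted, (2)~follows by a short induction on $h$, using that one round of leaf cutting turns a tree of height $h\ge 1$ into one of height $h-1$, so that the root's remaining lifetime decreases by exactly one per round.)

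Combining (1) and (2), a vertex $v$ is removed within the first $r$ rounds precisely when $\text{height}(\tree^{(v)})+1\le r$, i.e.\ when $\text{height}(\tree^{(v)})<r$, i.e.\ when $\tree^{(v)}\in\mathcal{T}_r$; summing these indicators over all $v\in\tree$ gives the displayed identity, and the reformulation~\eqref{eq:toll-function} is just the statement that $\tree\in\mathcal{T}_r$ means the height of $\tree$ is less than $r$. The one point that I expect to need genuine care is the compatibility claim in (1), namely that leaf cutting commutes with restriction to a fringe subtree; granting that, step (2) is essentially the height observation already made in the introduction, and the remainder is bookkeeping.
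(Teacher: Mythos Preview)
Your argument is correct and rests on the same key observation as the paper's proof: a vertex is removed within the first $r$ rounds precisely when the height of its fringe subtree is less than $r$ (equivalently, the root of $\tree$ is removed iff $\text{height}(\tree)<r$). The only cosmetic difference is that the paper verifies the branch recursion~\eqref{eq:additive-tree-parameter} directly---the deleted vertices in $\tree$ are those deleted in the branches plus the root if applicable---whereas you work with the equivalent fringe-sum representation $a_r(\tree)=\sum_{v\in\tree}f_r(\tree^{(v)})$; the content is the same. One small remark: in your observation~(1) the ``exception'' for the root of $\tree^{(v)}$ is not actually needed, since $v$ is a leaf of $\tree$ exactly when $\tree^{(v)}$ is a single vertex, so the commutation holds without qualification.
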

\begin{proof}
  It is easy to see that the number of removed vertices satisfies this additive
  property---the number of deleted nodes in some tree $\tree$ is precisely the sum
  of all deleted nodes in the branches of $\tree$ in case the root is not deleted. Otherwise,
  the sum has to be increased by one to account for the root node. Thus, the toll function
  determines whether or not the root node of $\tree$ is deleted.

  The fact that the root node is deleted if and only if the number of reductions $r$ is
  greater than the height of the tree is already illustrated in Figure~\ref{fig:leaf-reduction}.
\end{proof}

Basically, our strategy to analyze the quantity $a_r(\tree)$ for simply generated families
of trees uses the recursive structure of~\eqref{eq:additive-tree-parameter} together
with the structure of the family itself to derive a functional equation for a suitable
bivariate generating function $A_r(x,u)$. In this context, the trees $\tree$ in the family
$\mathcal{T}$ are enumerated with respect to their size (corresponding to the variable
$x$) and the value of the parameter $a_r(\tree)$ (corresponding to the variable $u$).

Throughout the remainder of this extended abstract, $\mathcal{T}$ denotes the family of trees under
investigation, and for all $r\in\Z_{\geq 1}$, $\mathcal{T}_r\subset\mathcal{T}$ denotes
the class of trees of height less than $r$. The corresponding generating functions are
denoted by $F(x)$ and $F_r(x)$.

Furthermore, from now on, $\tree_n$ denotes a random\footnote{The underlying probability
  distribution will always be clear from context.} tree of size $n$ (i.e., a tree that
consists of $n$ vertices) from $\mathcal{T}$. This means that formally, the quantity we
are interested in analyzing is the random variable $a_r(\tree_n)$ for large $n$.

\section{Reducing Simply Generated Trees}\label{sec:simply-generated}

\subsection{Recursive Characterization}\label{sec:simply-generated:characterization}

Let us begin by recalling the definition of simply generated trees. A simply generated
family of trees $\mathcal{T}$ can be defined by imposing a weight function on plane
trees. For a sequence of nonnegative weights $(w_k)_{k\geq 0}$ (we will make the customary
assumption that $w_0 = 1$ without loss of generality; cf.~\cite[Section
4]{Janson:2012:simply-generated-survey}), one defines the weight of a rooted ordered tree
$\tree$ as the product
\[ w(\tree) \coloneqq \prod_{j \geq 0} w_j^{N_j(\tree)}, \]
where $N_j(\tree)$ is the number of vertices in $\tree$ with precisely $j$ children. The weight
generating function
\begin{equation}\label{eq:f_def}
  F(x) = \sum_{\tree\in\mathcal{T}} w(\tree) x^{|\tree|},
\end{equation}
where $\abs{\tree}$ denotes the size of $\tree$ and where the sum is over all plane trees,
is easily seen to satisfy a functional equation. By setting
$\Phi(t) = \sum_{j \geq 0} w_j t^j$ and applying the symbolic method
(see~\cite[Chapter~I]{Flajolet-Sedgewick:ta:analy}) to decompose a simply generated
tree as the root node with some simply generated trees attached, we have
\begin{equation}\label{eq:sg_trees}
  F(x) = x \Phi(F(x)).
\end{equation}
We define a probability measure on the set of all rooted ordered trees with
$n$ vertices by assigning a probability proportional to $w(\tree)$ to every tree $\tree$.

\begin{remark}
  Several important families of trees are covered by suitable choices of weights:
  \begin{itemize}
  \item[--] plane trees are obtained from the weight sequence with $w_j = 1$ for all
    $j$,
  \item[--] labelled trees correspond to weights given by $w_j = \frac{1}{j!}$,
  \item[--] and $d$-ary trees
  (where every vertex has either $d$ or no children) are obtained by setting $w_0 = w_d = 1$
  and $w_j = 0$ for all other $j$.
  \end{itemize}
\end{remark}

In the context of simply generated trees, it is natural to define the bivariate generating
function $A_r(x,u)$ to be a weight generating function, i.e.,
\[ A_r(x,u) = \sum_{\tree\in\mathcal{T}} w(\tree) x^{|\tree|} u^{a_r(\tree)}. \]

As explicitly stated in Proposition~\ref{prop:toll-function}, the combinatorial class
$\mathcal{T}_r$ of trees of height less than $r$ is integral for deriving a functional
equation for $A_r(x,u)$. Write $F_r(x)$ for the weight generating function associated with
$\mathcal{T}_r$, defined in the same way as $F(x)$ in~\eqref{eq:f_def}.

Clearly, $F_1(x) = x$, since there is only one rooted tree of height $0$, which only
consists of the root. Moreover, via the decomposition mentioned in the interpretation
of~\eqref{eq:sg_trees}, we have
\begin{equation}\label{eq:iteration}
  F_r(x) = x \Phi(F_{r-1}(x))
\end{equation} for every $r > 1$.

Now we are prepared to derive the aforementioned functional equation.

\begin{proposition}\label{prop:simply-generated:functional-equation}
  The bivariate weight generating function $A_r(x,u)$ satisfies the functional equation
  \begin{equation}\label{eq:simply-generated:functional-equation}
    A_r(x,u) = x\Phi(A_r(x,u)) + \Big( 1 - \frac{1}{u} \Big) F_r(xu).
  \end{equation}
\end{proposition}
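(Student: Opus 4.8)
The plan is to build the functional equation for $A_r(x,u)$ directly from the combinatorial decomposition of a simply generated tree, tracking weight, size, and the parameter $a_r$ via the three formal variables. A simply generated tree $\tree$ is a root vertex with an ordered sequence of $k\geq 0$ branches $\tree^1,\dots,\tree^k$, each itself a simply generated tree, and the weight factorizes as $w(\tree) = w_k\prod_{i=1}^k w(\tree^i)$. By Proposition~\ref{prop:toll-function} and the additivity recursion~\eqref{eq:additive-tree-parameter}, we have $a_r(\tree) = \sum_{i=1}^k a_r(\tree^i) + \iverson{\tree\in\mathcal{T}_r}$, so $u^{a_r(\tree)} = u^{\iverson{\tree\in\mathcal{T}_r}}\prod_{i=1}^k u^{a_r(\tree^i)}$, and $x^{|\tree|} = x\prod_{i=1}^k x^{|\tree^i|}$.

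First I would write the ``naive'' decomposition that ignores the $\mathcal{T}_r$ correction: summing over $k\geq 0$ and over all choices of branches, the contribution of a root with $k$ branches is $x\,w_k\,A_r(x,u)^k$, so these terms sum to $x\Phi(A_r(x,u))$. This overcounts, however: for every tree $\tree$ of height less than $r$, the factor $u^{a_r(\tree)}$ should carry an extra factor of $u$ (since $\iverson{\tree\in\mathcal{T}_r}=1$), whereas the naive sum contributes it with exponent $a_r(\tree) - 1$ — that is, for such trees the naive decomposition effectively treats $a_r$ as if the root were not deleted. The correction term must therefore add, for each $\tree\in\mathcal{T}_r$, the quantity $w(\tree)x^{|\tree|}(u^{a_r(\tree)} - u^{a_r(\tree)-1})$.

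Next I would evaluate this correction in closed form. For a tree $\tree$ of height less than $r$, every vertex $v$ spawns a fringe subtree $\tree^{(v)}$ of height less than $r$, hence $f_r(\tree^{(v)}) = 1$ for all $v\in\tree$, and so $a_r(\tree) = \sum_{v\in\tree} f_r(\tree^{(v)}) = |\tree|$. Thus on $\mathcal{T}_r$ we have $u^{a_r(\tree)} = u^{|\tree|}$, and the correction becomes
\[
  \Big(1 - \frac1u\Big)\sum_{\tree\in\mathcal{T}_r} w(\tree)\,x^{|\tree|}u^{|\tree|} = \Big(1 - \frac1u\Big)\sum_{\tree\in\mathcal{T}_r} w(\tree)\,(xu)^{|\tree|} = \Big(1 - \frac1u\Big) F_r(xu),
\]
using the definition of $F_r$. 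Adding the corrected contribution to $x\Phi(A_r(x,u))$ yields~\eqref{eq:simply-generated:functional-equation}.

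The step I expect to require the most care is making the bookkeeping of the correction term rigorous: one must be sure that the naive sum $x\Phi(A_r(x,u))$ genuinely enumerates every tree exactly once with the factor $u^{\sum_i a_r(\tree^i)}$ and no spurious contributions, so that the discrepancy from the true generating function is exactly the sum over $\mathcal{T}_r$ described above. An equivalent and perhaps cleaner formulation is to split $A_r(x,u) = \sum_{\tree\notin\mathcal{T}_r} w(\tree)x^{|\tree|}u^{a_r(\tree)} + \sum_{\tree\in\mathcal{T}_r} w(\tree)(xu)^{|\tree|}$; apply the branch decomposition to both pieces, noting that $\tree\notin\mathcal{T}_r$ contributes $\iverson{\tree\in\mathcal{T}_r}=0$ to the root toll while $\tree\in\mathcal{T}_r$ contributes $1$, and then recombine. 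Either way, convergence is not an issue since everything is an identity of formal power series in $x$ (with $u$ a formal marker), so the argument is purely algebraic.
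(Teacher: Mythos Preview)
Your proposal is correct and follows essentially the same approach as the paper: decompose by root degree to obtain $x\Phi(A_r(x,u))$, observe that this undercounts the $u$-exponent by one precisely for trees in $\mathcal{T}_r$, and repair with the correction term $(1-1/u)F_r(xu)$. The alternative formulation you mention at the end---splitting $A_r(x,u)$ into sums over $\mathcal{T}_r$ and its complement before decomposing---is in fact exactly how the paper organizes the computation; you also make explicit the identity $a_r(\tree)=|\tree|$ for $\tree\in\mathcal{T}_r$, which the paper uses without comment.
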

\begin{proof}
  We can express the
sum over all trees $\tree$ in the definition of $A_r(x,u)$ as a sum over all possible root
degrees $k$ and $k$-tuples of branches. In view of~\eqref{eq:toll-function}, this gives us
\begin{align*}
  A_r(x,u) & = \sum_{\tree\in\mathcal{T}\setminus\mathcal{T}_{r}} w(\tree) x^{\abs{\tree}}u^{a_{r}(\tree)}
             + \sum_{\tree\in\mathcal{T}_{r}} w(\tree) x^{\abs{\tree}} u^{\abs{\tree}}\\
           & = \sum_{k \geq 0} w_k \sum_{\tree^{1}\in\mathcal{T}} \cdots
             \sum_{\tree^{k}\in\mathcal{T}} \Big(\prod_{j=1}^k w(\tree^{j}) \Big) x^{1+|\tree^{1}|+\cdots+|\tree^{k}|}
             u^{a_r(\tree^{1})+\cdots+a_r(\tree^{k})}\\
           & \quad + \sum_{\tree \in \mathcal{T}_r} w(\tree) x^{|\tree|} \big(u^{|\tree|} - u^{|\tree|-1}\big) \\
           & = x \sum_{k \geq 0} w_k \Big( \sum_{\tree\in\mathcal{T}} w(\tree) x^{|\tree|} u^{a_r(\tree)} \Big)^k +
             \Big(1 - \frac{1}{u} \Big) \sum_{\tree \in \mathcal{T}_r} w(\tree) (xu)^{|\tree|} \\
           & = x\Phi(A_r(x,u)) + \Big( 1 - \frac{1}{u} \Big) F_r(xu). \qedhere
\end{align*}
\end{proof}
\begin{remark}
  Setting $u=1$ reduces this functional equation to~\eqref{eq:sg_trees}, with
  $A_r(x,1) = F(x)$.
\end{remark}

The functional equation~\eqref{eq:simply-generated:functional-equation} provides enough
leverage to carry out a full asymptotic analysis of the behavior of $a_r(\tree_{n})$ for simply
generated trees.

\subsection{Parameter Analysis}\label{sec:simply-generated:analysis}

Now we use the functional equation to determine mean and variance of
$a_r$, which are obtained from the partial derivatives with respect to $u$, evaluated at
$u=1$. To be more precise, if $\tree_n$ denotes a random (with respect to the probability
distribution determined by the given weight sequence) simply generated tree of size $n$,
then after normalization, the factorial moments
\[ \E a_r(\tree_n)^{\underline{k}} \coloneqq \E (a_r(\tree_n) (a_r(\tree_n) - 1) \cdots (a_r(\tree_n)
  - k + 1)) \]
can be extracted as the coefficient of $x^n$ in the partial derivative
$\frac{\partial^k\,}{\partial u^k} A_r(x,u)\big|_{u=1}$. And from there, expectation and
variance can be obtained in a straightforward way.

From this point on, we make some reasonable assumptions on the weight sequence
$(w_k)_{k\geq 0}$. In addition to $w_0 = 1$, we assume that there is a $k > 1$ with $w_k >
0$ to avoid trivial cases. Furthermore, we require that if $R > 0$ is the radius of
convergence of the weight generating function $\Phi(t) = \sum_{k\geq 0} w_k t^k$,
there is a unique positive $\tau$ (the \emph{fundamental constant}) with $0 < \tau < R$
such that $\Phi(\tau) - \tau \Phi'(\tau) = 0$. This is to ensure that the singular
behavior of $F(x)$ can be fully characterized (see, e.g.,
\cite[Section~VI.7]{Flajolet-Sedgewick:ta:analy}).

\begin{proposition}
  Let $r \in \Z_{\geq 1}$ be fixed, let $\mathcal{T}$ be a simply generated family of
  trees and let $\mathcal{T}_r \subset \mathcal{T}$ be the set of trees with height less
  than $r$. If $\tree_n$ denotes a random tree from $\mathcal{T}$ of size $n$ (with respect
  to the probability measure defined on $\mathcal{T}$), then for $n\to\infty$ the expected
  number of removed nodes when applying the ``cutting leaves'' procedure $r$ times to
  $\tree_n$ and the corresponding variance satisfy
  \begin{equation}\label{eq:simply-generated:expectation-variance}
    \E a_r(\tree_n) = \mu_r n + \frac{\rho \tau^2 F_r'(\rho) +
      3\beta\tau F_r(\rho) - \alpha^2 F_r(\rho)}{2\tau^3} + O(n^{-1}),
    \quad\text{ and }\quad
    \V a_r(\tree_n) = \sigma_r^2 n + O(1).
  \end{equation}
  The constants $\mu_r$ and $\sigma_r^2$ are given by
  \begin{equation}\label{eq:simply-generated:constants}
    \mu_r = \frac{F_r(\rho)}{\tau},\qquad \sigma_r^2 = \frac{4 \rho \tau^3 F_r'(\rho)
      - 4 \rho \tau^2 F_r(\rho)F_r'(\rho) + (2\tau^2 - \alpha^2) F_r(\rho)^2 - 2\tau^3 F_r(\rho)}{2\tau^4},
  \end{equation}
  where $F_r(x)$ is the weight generating function corresponding to $\mathcal{T}_r$,
  $\rho$ is the radius of convergence of $F(x)$ and given by $\rho = \tau/\Phi(\tau)$, and
  the constants $\alpha$ and $\beta$ are given by
  \[ \alpha = \sqrt{\frac{2\tau}{\rho \Phi''(\tau)}}, \qquad
    \beta = \frac{1}{\rho\Phi''(\tau)} -
    \frac{\tau \Phi'''(\tau)}{3\rho \Phi''(\tau)^2}. \]
\end{proposition}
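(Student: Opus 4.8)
The plan is to extract mean and variance from the functional equation~\eqref{eq:simply-generated:functional-equation} by singularity analysis, following the standard machinery for additive parameters on simply generated trees (cf.~\cite[Section~VI.10, Proposition~VI.10]{Flajolet-Sedgewick:ta:analy} or the references on additive tree parameters). First I would record the singular expansion of $F(x)$ itself: under the stated assumptions there is a square-root singularity at $\rho = \tau/\Phi(\tau)$, and one has an expansion of the shape
\[
  F(x) = \tau - \alpha\sqrt{1-x/\rho} + \beta(1-x/\rho) + O\bigl((1-x/\rho)^{3/2}\bigr),
\]
with $\alpha,\beta$ as in the statement (these come from inverting $x = F/\Phi(F)$ near $F=\tau$, using $\Phi(\tau)-\tau\Phi'(\tau)=0$ and expanding to third order). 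Since $F_r(x)$ is a polynomial (trees of height less than $r$ are finite in each degree, but more importantly $F_r$ satisfies the finite iteration~\eqref{eq:iteration} starting from $F_1(x)=x$, so it is entire — in fact a polynomial when $\Phi$ is), $F_r(x)$ and $F_r'(x)$ are analytic at $\rho$ and contribute only their Taylor values $F_r(\rho)$, $F_r'(\rho)$, etc.

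Next I would differentiate~\eqref{eq:simply-generated:functional-equation} with respect to $u$ and set $u=1$. Writing $A = A_r(x,u)$ and using $A_r(x,1) = F(x)$, the first derivative $A_u := \partial_u A_r|_{u=1}$ satisfies a linear equation
\[
  A_u = x\Phi'(F(x))\,A_u + x F_r'(x) + \text{(lower-order terms from }\tfrac{\partial}{\partial u}(1-1/u)F_r(xu)\text{)},
\]
so that
\[
  A_u = \frac{x F_r'(x) + F_r(x)}{1 - x\Phi'(F(x))}
\]
after collecting the terms (the exact numerator has to be computed carefully, differentiating $(1-1/u)F_r(xu)$ at $u=1$ gives $F_r(x) + 0\cdot xF_r'(x)$, so the numerator is $x\cdot\frac{d}{dx}$-type contributions plus $F_r(x)$ — this is one of the routine-but-must-be-exact steps). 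The denominator $1 - x\Phi'(F(x))$ vanishes at $x=\rho$ because $\Phi'(\tau) = \Phi(\tau)/\tau = 1/\rho$; substituting the singular expansion of $F$ shows $1 - x\Phi'(F(x)) \sim c\sqrt{1-x/\rho}$ for an explicit constant $c$ (namely $c = \alpha\rho\Phi''(\tau)\cdot\rho^{-1}\cdot\ldots$; concretely $1 - x\Phi'(F(x))$ has leading term involving $\alpha$ and $\Phi''(\tau)$, and one checks $\alpha^2 = 2\tau/(\rho\Phi''(\tau))$ is exactly what makes the bookkeeping close). Hence $A_u$ has a $(1-x/\rho)^{-1/2}$ singularity, and singularity analysis gives $[x^n]A_u \sim \mu_r n$ with $\mu_r$ read off from the residue-type constant; pushing the expansion of $A_u$ to the next order (the $(1-x/\rho)^0$ term) yields the constant $\frac{\rho\tau^2 F_r'(\rho) + 3\beta\tau F_r(\rho) - \alpha^2 F_r(\rho)}{2\tau^3}$ in~\eqref{eq:simply-generated:expectation-variance}, and the $O(n^{-1})$ error comes from the $(1-x/\rho)^{1/2}$ term.

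For the variance I would differentiate~\eqref{eq:simply-generated:functional-equation} twice in $u$ at $u=1$ to get a linear equation for $A_{uu} := \partial_u^2 A_r|_{u=1}$:
\[
  A_{uu}\bigl(1 - x\Phi'(F(x))\bigr) = x\Phi''(F(x))A_u^2 + (\text{terms from }F_r(xu)\text{ and its }u\text{-derivatives}),
\]
so $A_{uu} = \frac{x\Phi''(F(x))A_u^2 + \cdots}{1-x\Phi'(F(x))}$; since $A_u^2$ already contributes $(1-x/\rho)^{-1}$, the whole thing has a $(1-x/\rho)^{-3/2}$ singularity, giving $[x^n]A_{uu}\sim c' n^{3/2}\cdot\ldots$ — wait, more precisely $[x^n]A_{uu}$ grows like $n\cdot[x^n]A_u$'s leading behaviour squared over the tree count, so the factorial-moment asymptotics combine as usual to $\E a_r(\tree_n)^{\underline 2} = \E a_r(\tree_n)^2 - \E a_r(\tree_n)$, and then $\V a_r(\tree_n) = \E a_r^{\underline 2} + \E a_r - (\E a_r)^2$. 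The leading $n^2$ terms must cancel (they do, because the coefficient of $n^2$ in $\E a_r^{\underline 2}$ is $\mu_r^2$), leaving $\V a_r(\tree_n) = \sigma_r^2 n + O(1)$ with $\sigma_r^2$ as in~\eqref{eq:simply-generated:constants}.

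\textbf{Main obstacle.} The conceptual path is entirely standard; the real work — and the main place errors creep in — is the exact bookkeeping of constants: tracking the $\sqrt{1-x/\rho}$, $(1-x/\rho)$, $(1-x/\rho)^{3/2}$ coefficients through the composition $\Phi(F(x))$, through the linear equations for $A_u$ and $A_{uu}$, and then assembling $\V = \E a_r^{\underline 2} + \E a_r - (\E a_r)^2$ so that the $n^2$ terms cancel and the correct coefficient of $n$ survives. In particular one must verify the cancellation of divergent/leading contributions and simplify using the defining relations $\Phi(\tau)=\tau\Phi'(\tau)$, $\rho=\tau/\Phi(\tau)$, $\alpha^2 = 2\tau/(\rho\Phi''(\tau))$, and the formula for $\beta$; this is where I would lean on the symbolic-computation notebook referenced in the introduction to confirm the final closed forms.
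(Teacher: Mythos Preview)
Your approach is essentially the paper's: differentiate the functional equation for $A_r(x,u)$ in $u$, solve the resulting linear equations for $A_u$ and $A_{uu}$ at $u=1$, feed the singular expansion of $F$ through, and assemble mean and variance by singularity analysis. Two points need correction, and one shortcut from the paper is worth noting.

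\textbf{First}, your displayed formula $A_u = \dfrac{xF_r'(x)+F_r(x)}{1-x\Phi'(F(x))}$ is wrong: the $xF_r'(x)$ term does not appear. Differentiating $(1-1/u)F_r(xu)$ in $u$ gives $u^{-2}F_r(xu) + (1-1/u)xF_r'(xu)$, which at $u=1$ is just $F_r(x)$ (as you yourself observe in the parenthetical). So
\[
  A_u = \frac{F_r(x)}{1 - x\Phi'(F(x))}.
\]
Carrying the spurious $xF_r'(x)$ through would corrupt all the constants.

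\textbf{Second}, your claim that $F_r$ is entire (``a polynomial when $\Phi$ is'') is false in general: for plane trees $\Phi(t)=1/(1-t)$ one has $F_2(x)=x/(1-x)$, $F_3(x)=x(1-x)/(1-2x)$, etc., none of which are entire. What is needed, and what the paper proves by an easy induction on $r$ using $|F_{r-1}(\rho)|<\tau<R$, is only that $F_r$ has radius of convergence strictly larger than $\rho$, so that $F_r$ is analytic at $\rho$ and contributes merely its Taylor values there.

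\textbf{Finally}, the paper avoids expanding $1-x\Phi'(F(x))$ by hand: from $F(x)=x\Phi(F(x))$ one gets $xF'(x)=F(x)/(1-x\Phi'(F(x)))$, so
\[
  A_u = \frac{xF'(x)\,F_r(x)}{F(x)},
\]
and similarly $A_{uu}$ can be written entirely in terms of $F$, $F'$, $F''$ and $F_r$, $F_r'$. This makes the singular expansion of $A_u$ and $A_{uu}$ a direct substitution of the known expansion of $F$, which is cleaner than tracking $\Phi'(F(x))$ and $\Phi''(F(x))$ separately and reduces the risk of the bookkeeping errors you rightly flag as the main obstacle.
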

\begin{remark}
  For the sake of technical convenience, we are going to assume that $\Phi(t)$ is an
  aperiodic power series, meaning that the period $p$, i.e., the greatest common divisor
  of all indices $j$ for  which $w_j\neq 0$, is 1. This implies (see 
  \cite[Theorem~VI.6]{Flajolet-Sedgewick:ta:analy}) that $F(x)$ has a unique square root
  singularity located at $\rho = \tau/\Phi(\tau)$, which makes some of our computations less
  tedious. However, all of our results also apply (mutatis mutandis) if this aperiodicity
  condition is not satisfied---with the restriction that then, $n-1$ has to be a multiple of
  the period $p$.
\end{remark}
\begin{proof}
First, we have
\[
  \frac{\partial}{\partial u} A_r(x,u) =
  x \Phi'(A_r(x,u)) \frac{\partial}{\partial u} A_r(x,u)
  + \frac{1}{u^2} F_r(xu) + x \Big( 1 - \frac{1}{u} \Big) F_r'(xu),
\]
so
\[
  \frac{\partial}{\partial u} A_r(x,u) \Big|_{u=1} =
  \frac{F_r(x)}{1-x \Phi'(A_r(x,1))} = \frac{F_r(x)}{1-x \Phi'(F(x))}.
\]
Analogously, we can use implicit differentiation on~\eqref{eq:sg_trees} to obtain
\[
  x F'(x) = \frac{F(x)}{1-x \Phi'(F(x))},
\]
so
\[
  \frac{\partial}{\partial u} A_r(x,u) \Big|_{u=1} =
  \frac{x F'(x) F_r(x)}{F(x)}.
\]
The second derivative is found in the same way: we obtain
\[
  \frac{\partial^2}{\partial u^2} A_r(x,u) \Big|_{u=1} =
  \frac{2(x F_r'(x) - F_r(x))}{1-x \Phi'(F(x))}
  + \frac{x F_r(x)^2\Phi''(F(x))}{(1-x \Phi'(F(x)))^3}
\]
by differentiating implicitly a second time. Again, this can be
expressed in terms of the derivatives of~$F$:
\[
  \frac{\partial^2}{\partial u^2} A_r(x,u) \Big|_{u=1} =
  \Big( \frac{2F_r(x)^2}{F(x)^2} - \frac{2F_r(x)}{F(x)}
  + \frac{2x F_r'(x)}{F(x)} \Big) x F'(x)
  + \frac{x^2 F_r(x)^2 F''(x)}{F(x)^2} - \frac{2x^2 F_r(x)^2 F'(x)^2}{F(x)^3}.
\]

By the assumptions made in this section, there is a positive real number $\tau$ that is
smaller than the radius of convergence of $\Phi$ and satisfies the equation
$\tau \Phi'(\tau) = \Phi(\tau)$. It is well known
(see~\cite[Section~VI.7]{Flajolet-Sedgewick:ta:analy}) that in this case, $F(x)$ has a
square root singularity at $\rho = \tau/\Phi(\tau) = 1/\Phi'(\tau)$, with singular
expansion
\begin{equation}\label{eq:simply-generated:F-expansion}
  F(x) = \tau - \alpha \sqrt{1-x/\rho} + \beta (1-x/\rho) + O((1-x/\rho)^{3/2}).
\end{equation}
Here, the coefficients $\alpha$ and $\beta$ are given by
\begin{equation}\label{eq:alpha}
  \alpha = \sqrt{\frac{2\tau}{\rho \Phi''(\tau)}}
\end{equation}
and
\[
  \beta = \frac{1}{\rho \Phi''(\tau)}
  - \frac{\tau \Phi'''(\tau)}{3\rho \Phi''(\tau)^2}
\]
respectively. Note that in case more precise asymptotics are desired, further terms of
the singular expansion can be computed easily.

Due to our aperiodicity assumption, $\rho$ is the only singularity on $F$'s circle of
convergence, and the conditions of singularity analysis (see
\cite{Flajolet-Odlyzko:1990:singul} or \cite[Chapter~VI]{Flajolet-Sedgewick:ta:analy}, for
example) are satisfied.

Next we note that $F_r$ has greater radius of convergence than $F$. This follows
from~\eqref{eq:iteration} by induction on $r$: it is clear for $r=1$, and if $F_{r-1}$ is
analytic at $\rho$, then so is $F_r$, since $|F_{r-1}(\rho)| < F(\rho) = \tau$ is smaller
than the radius of convergence of $\Phi$. So $F_r$ has greater radius of convergence than
$F$. This implies that $F_r$ has a Taylor expansion around $\rho$:
\[
  F_r(x) = F_r(\rho) - \rho F_r'(\rho) (1 - x/\rho) + O((1-x/\rho)^2).
\]
We find that
\begin{align*}
  \frac{\partial}{\partial u} A_r(x,u) \Big|_{u=1} = \frac{x F'(x) F_r(x)}{F(x)}
  & = \frac{F_r(\rho)}{\tau} \cdot (x F'(x))
    + \frac{\rho \tau^2 F_r'(\rho) + 3\beta \tau F_r(\rho) - \alpha^2 F_r(\rho)}{2\tau^3}
    \cdot F(x) \\
  & \quad + C_1 + C_2 (1-x/\rho) + O((1-x/\rho)^{3/2}).
\end{align*}
for certain constants $C_1$ and $C_2$.

The $n$th coefficient of the derivative $[z^n]\frac{\partial}{\partial u}
A_r(x,u) \big|_{u=1}$ can now be extracted by means of singularity analysis. Normalizing
the result by dividing by $[z^n] A_r(x,1) = [z^n] F(x)$ (again extracted by means of
singularity analysis; the corresponding expansion is given
in~\eqref{eq:simply-generated:F-expansion}) yields an asymptotic expansion for $\E
a_r(\tree_n)$. We find
\begin{equation}\label{eq:mean_sg}
  \E a_r(\tree_n) = \frac{F_r(\rho)}{\tau} \cdot n
  + \frac{\rho \tau^2 F_r'(\rho) + 3\beta \tau F_r(\rho) - \alpha^2 F_r(\rho)}{2\tau^3}
  + O(n^{-1}).
\end{equation}
Similarly, from
\begin{align*}
  \frac{\partial^2}{\partial u^2} A_r(x,u) \Big|_{u=1}
  &= \Big(\frac{2F_r(x)^2}{F(x)^2} - \frac{2F_r(x)}{F(x)}
    + \frac{2x F_r'(x)}{F(x)} \Big) x F'(x)
    + \frac{x^2 F_r(x)^2 F''(x)}{F(x)^2} - \frac{2x^2 F_r(x)^2 F'(x)^2}{F(x)^3} \\
  &= \Big(\frac{F_r(\rho)}{\tau} \Big)^2 \cdot (x^2F''(x)+xF'(x)) \\
  &\quad + \frac{4\rho \tau^3 F_r'(\rho) - 2\rho \tau^2 F_r(\rho) F_r'(\rho)
    + (2\tau^2 + 6\beta \tau - 3\alpha^2)F_r(\rho)^2
    - 4\tau^3 F_r(\rho)}{2\tau^4} \cdot (xF'(x)) \\
  &\quad + C_3 + O((1-x/\rho)^{1/2}),
\end{align*}
we can use singularity analysis to find an asymptotic expansion for the second factorial
moment $\E a_r(\tree_n)^{\underline{2}}$. Plugging the result and the expansion for the
mean from~(\ref{eq:mean_sg}) into the well-known identity
\[ \V a_r(\tree_n) = \E a_r(\tree_n)^{\underline{2}} + \E a_r(\tree_n) - (\E a_r(\tree_n))^2 \]
then yields
\[
  \V a_r(\tree_n) =
  \frac{4 \rho \tau^3 F_r'(\rho) - 4 \rho \tau^2 F_r(\rho)F_r'(\rho)
    + (2\tau^2 - \alpha^2) F_r(\rho)^2 - 2\tau^3 F_r(\rho)}{2\tau^4} \cdot n + O(1).
  \qedhere
\]
\end{proof}

While this analysis provided us with a precise characterization for the mean and the
variance of the number of deleted vertices, it would be interesting to have more
information on how these quantities behave for a very large number of iterated
reductions. The following proposition gives more details on the main contribution.

\begin{proposition}\label{prop:simply-generated:constants-asy}
  For $r\to\infty$, the constants $\mu_r$ and $\sigma_r^2$ admit the asymptotic expansions
  \begin{equation}\label{eq:simply-generated:constants-asy}
    \mu_r = 1 - \frac{2}{\rho\tau\Phi''(\tau)} r^{-1} + o(r^{-1}) \quad\text{ and }\quad
    \sigma_r^2 = \frac{1}{3\rho\tau\Phi''(\tau)} + o(1),
  \end{equation}
  respectively.
\end{proposition}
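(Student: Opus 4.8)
The plan is to reduce everything to the asymptotic behaviour of the two quantities $F_r(\rho)$ and $F_r'(\rho)$ as $r\to\infty$, since by~\eqref{eq:simply-generated:constants} both $\mu_r$ and $\sigma_r^2$ are explicit rational functions of these (together with the fixed constants $\tau$, $\rho$, $\alpha$). Recall that $F_r$ is analytic at $\rho$, so $F_r(\rho)$ and $F_r'(\rho)$ are well-defined finite numbers; moreover $\mathcal{T}_r\subseteq\mathcal{T}_{r+1}\subseteq\mathcal{T}$ shows that $F_r(\rho)$ increases to $F(\rho)=\tau$, so $\varepsilon_r\coloneqq\tau-F_r(\rho)$ is positive and decreases monotonically to $0$.

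First I would set up the recursion for $\varepsilon_r$. Evaluating~\eqref{eq:iteration} at $x=\rho$ gives $F_r(\rho)=\rho\,\Phi(F_{r-1}(\rho))$, and since $\rho\Phi(\tau)=\tau$ and $\rho\Phi'(\tau)=1$ (because $\tau\Phi'(\tau)=\Phi(\tau)$ and $\rho=\tau/\Phi(\tau)$), a Taylor expansion of $\Phi$ at $\tau$ yields
\[ \varepsilon_r = \varepsilon_{r-1} - \tfrac12\,\rho\Phi''(\tau)\,\varepsilon_{r-1}^2 + O(\varepsilon_{r-1}^3). \]
This is the classical recursion whose solutions satisfy $\varepsilon_r = \frac{2}{\rho\Phi''(\tau)}\,r^{-1} + o(r^{-1})$; the refined version of the argument even gives an error term of order $O(r^{-2}\log r)$, which I will use below. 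Since $\mu_r = F_r(\rho)/\tau = 1 - \varepsilon_r/\tau$, the expansion of $\mu_r$ in~\eqref{eq:simply-generated:constants-asy} is then immediate.

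Next I would analyse $h_r\coloneqq F_r'(\rho)$. Differentiating~\eqref{eq:iteration} gives the \emph{linear} recursion $h_r = \Phi(F_{r-1}(\rho)) + \rho\Phi'(F_{r-1}(\rho))\,h_{r-1}$ with $h_1=1$. Writing $\rho\Phi'(F_{r-1}(\rho)) = 1-\delta_{r-1}$, a Taylor expansion at $\tau$ combined with the previous step gives $\delta_{r-1} = \rho\Phi''(\tau)\,\varepsilon_{r-1} + O(\varepsilon_{r-1}^2) = \frac{2}{r} + O(r^{-2}\log r)$, while $\Phi(F_{r-1}(\rho)) \to \Phi(\tau) = \tau/\rho$. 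Telescoping,
\[ h_r = \prod_{j=1}^{r-1}(1-\delta_j) + \sum_{m=1}^{r-1}\Phi(F_m(\rho))\prod_{j=m+1}^{r-1}(1-\delta_j), \]
and because $\sum_{j\le r}\delta_j = 2\log r + O(1)$ with $\sum_j\delta_j^2 < \infty$, the products satisfy $\prod_{j=m+1}^{r-1}(1-\delta_j) = (m/r)^2(1+o(1))$ as $m\to\infty$ (the bounded-$m$ terms being negligible in the sum); evaluating the resulting weighted sum gives $h_r \sim \frac{\tau}{3\rho}\,r$, and in particular $h_r\varepsilon_r \to \frac{2\tau}{3\rho^2\Phi''(\tau)}$.

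Finally I would substitute into~\eqref{eq:simply-generated:constants}. Grouping the two terms containing $h_r$ as $4\rho\tau^2 h_r(\tau - F_r(\rho)) = 4\rho\tau^2\,h_r\varepsilon_r$ and letting $r\to\infty$ using $F_r(\rho)\to\tau$ and $\alpha^2 = \frac{2\tau}{\rho\Phi''(\tau)}$, the numerator of $\sigma_r^2$ tends to $\frac{8\tau^3}{3\rho\Phi''(\tau)} - \frac{2\tau^3}{\rho\Phi''(\tau)} = \frac{2\tau^3}{3\rho\Phi''(\tau)}$, so that $\sigma_r^2 \to \frac{1}{3\rho\tau\Phi''(\tau)}$, as claimed. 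The main obstacle is the third step: controlling the growth of $F_r'(\rho)$. This is where the refined error estimate for $\varepsilon_r$ is genuinely needed (so that $\delta_j$ agrees with $2/j$ up to a summable correction, making the products truly of order $(m/r)^2$), followed by a careful but otherwise routine estimate of the weighted sum to pin down the leading constant $\tau/(3\rho)$. The recursion for $\varepsilon_r$, the expansion of $\mu_r$, and the closing substitution are all straightforward.
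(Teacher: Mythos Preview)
Your proposal is correct, and for $\mu_r$ it is essentially identical to the paper's proof: both set $\varepsilon_r=\tau-F_r(\rho)$, derive the quadratic recursion $\varepsilon_r=\varepsilon_{r-1}-\tfrac12\rho\Phi''(\tau)\varepsilon_{r-1}^2+O(\varepsilon_{r-1}^3)$ from $F_r(\rho)=\rho\Phi(F_{r-1}(\rho))$, and read off $\varepsilon_r\sim\frac{2}{\rho\Phi''(\tau)}r^{-1}$.

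The treatment of $h_r=F_r'(\rho)$ differs. You telescope the linear recursion $h_r=(1-\delta_{r-1})h_{r-1}+\Phi(F_{r-1}(\rho))$ explicitly and estimate the products $\prod_{j=m+1}^{r-1}(1-\delta_j)\sim(m/r)^2$; this is valid, but it genuinely needs the refined bound $\varepsilon_r=\frac{2}{\rho\Phi''(\tau)}r^{-1}+O(r^{-2}\log r)$ so that $\delta_j-2/j$ is summable and the products are controlled uniformly. The paper instead first observes $d_r=O(r)$ (from $\rho\Phi'(c_{r-1})<1$) and then multiplies the recursion through by $r^2$, obtaining $r^2d_r=(r-1)^2d_{r-1}+\frac{\tau}{\rho}r^2+o(r^2)$, which telescopes directly to $d_r=\frac{\tau}{3\rho}r+o(r)$ using only the crude $o(r^{-1})$ error for $\varepsilon_r$. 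The paper's trick is shorter and avoids the bootstrapping step; your route is more mechanical (solve the linear recursion, then estimate), which has the advantage of being the obvious thing to try. Your final substitution into~\eqref{eq:simply-generated:constants}, grouping the two $h_r$-terms as $4\rho\tau^2h_r\varepsilon_r$, is a nice touch that the paper does not spell out.
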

\begin{proof}
  In order to obtain the behavior of $\mu_r$ and $\sigma_r^2$ for $r\to\infty$, we have to
  study the behavior of $c_r = F_r(\rho)$ and $d_r = F_r'(\rho)$ as $r \to \infty$. First,
  we have the recursion
  \[ c_r = \rho \Phi(c_{r-1}). \]
  We note that $c_r$ is increasing in $r$ (since the
  coefficients of $F_r(x)$ are all nondecreasing in $r$ in view of the combinatorial
  interpretation), and $c_r \to \tau$ as $r \to \infty$. By Taylor expansion around
  $\tau$, we obtain
  \[
    \tau - c_r = \rho \Phi(\tau) - \rho \Phi(c_{r-1})
    = \rho \Phi'(\tau) (\tau - c_{r-1})
    - \frac{\rho \Phi''(\tau)}{2} (\tau - c_{r-1})^2
    + O((\tau - c_{r-1})^3),
  \]
  and since $\rho \Phi'(\tau) = 1$, it follows that
  \[
    \frac{1}{\tau - c_r} = \frac{1}{\tau - c_{r-1}}
    + \frac{\rho \Phi''(\tau)}{2} + O(\tau - c_{r-1}).
  \]
  Now we can conclude that
  \[
    \frac{1}{\tau - c_r} = \frac{\rho \Phi''(\tau)}{2} r + o(r),
  \]
  so
  \[
    \mu_r = \frac{c_r}{\tau}
    = 1 - \frac{2}{\rho \tau \Phi''(\tau)} r^{-1} + o(r^{-1}).
  \]
  Further terms can be derived by means of bootstrapping. Similarly, differentiating the
  identity $F_r(x) = x \Phi(F_{r-1}(x))$ gives us the recursion
  \[ d_r = \rho \Phi'(c_{r-1}) d_{r-1} + \frac{c_r}{\rho}. \]
  The sequence $d_r$ is increasing for the same reason $c_r$ is. Moreover, since
  $\rho \Phi'(c_{r-1}) < \rho \Phi'(\tau) = 1$, it follows from the recursion that
  $d_r = O(r)$. Now, we use Taylor expansion again to obtain
  \begin{align*}
    d_r &= \rho \big(\Phi'(\tau) - \Phi''(\tau) (\tau-c_{r-1})
          + O((\tau - c_{r-1})^2)\big) d_{r-1}
          + \frac{c_r}{\rho} \\
        &= \big(\rho \Phi'(\tau) - \rho\Phi''(\tau) (\tau-c_{r-1})
          + O((\tau - c_{r-1})^2) \big) d_{r-1}
          + \frac{c_r}{\rho} \\
        &= \Big( 1 - \frac{2}{r} + o(r^{-1}) \Big) d_{r-1}
          + \frac{\tau}{\rho} + o(1).
  \end{align*}
  This can be rewritten as $r^2 d_r = (r-1)^2 d_{r-1} + \frac{\tau}{\rho} r^2 + o(r^2)$,
  which gives us $r^2 d_r = \frac{\tau}{3\rho} r^3 + o(r^3)$ and allows us to conclude that
  \[ d_r = \frac{\tau}{3\rho} r + o(r). \]
  Plugging the formulas for $c_r$ and $d_r$ into~\eqref{eq:simply-generated:constants}, we
  find that
  \[
    \sigma_r^2 = \frac{1}{3\rho \tau \Phi''(\tau)} + o(1).
    \qedhere
  \]
\end{proof}

As a side effect of Proposition~\ref{prop:simply-generated:constants-asy}, we can also
observe that for sufficiently large $r$, the constant $\sigma_r^2$ is strictly
positive. As a consequence, the parameter $a_r(\tree_n)$ is asymptotically normally
distributed in these cases.

However, we can do even better: we can prove that $a_{r}(\tree_{n})$ always admits a
Gaussian limit law, except for an---in some sense---pathological case.

\begin{proposition}\label{prop:simply-generated:limit-law}
  Let $\mathcal{T}$ be a simply generated family of trees and fix $r\in\Z_{\geq 1}$. Then
  the random variable $a_{r}(\tree_{n})$ is asymptotically normally distributed, except in
  the case of $d$-ary trees when $r=1$. In all other cases we find that for $x\in\R$ we have
  \[ \P\Big(\frac{a_{r}(\tree_{n}) - \mu_{r}n}{\sqrt{\sigma_{r}^{2} n}} \leq x\Big) =
    \frac{1}{\sqrt{2\pi}} \int_{-\infty}^{x} e^{-t^{2}/2}~dt + O(n^{-1/2}). \]
\end{proposition}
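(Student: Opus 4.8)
The plan is to apply a quasi-power–type theorem for bivariate generating functions defined by a functional equation of the smooth implicit-function / singularity perturbation type, in the spirit of the results used to establish central limit theorems for additive parameters of simply generated trees (see \cite{Flajolet-Sedgewick:ta:analy}, \cite{Wagner:2015:centr-limit}). The functional equation~\eqref{eq:simply-generated:functional-equation} has exactly the right shape: write it as $A_r(x,u) = G(x,u,A_r(x,u))$ with $G(x,u,v) = x\Phi(v) + (1-1/u) F_r(xu)$. For $u$ in a fixed complex neighbourhood of $1$, $G$ is analytic in all variables at the relevant point (recall $F_r$ has radius of convergence strictly larger than $\rho$, so $F_r(xu)$ is analytic near $x=\rho$, $u=1$), and at $u=1$ the equation degenerates to the standard simply generated equation~\eqref{eq:sg_trees}, whose solution $F(x)$ has the square-root singularity~\eqref{eq:simply-generated:F-expansion}. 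One then invokes the analytic perturbation of a square-root singularity: there is a function $\rho(u)$, analytic near $u=1$ with $\rho(1)=\rho$, such that $A_r(x,u)$ has a square-root singularity in $x$ at $x=\rho(u)$, uniformly for $u$ near $1$; consequently
\[
 [x^n] A_r(x,u) = C(u)\, \rho(u)^{-n} n^{-3/2}\bigl(1 + O(n^{-1})\bigr)
\]
uniformly for $u$ in a neighbourhood of $1$, with $C(u)$ analytic and nonvanishing. Dividing by $[x^n]F(x)$ gives the normalized probability generating function $\E u^{a_r(\tree_n)}$ in quasi-power form $(\rho/\rho(u))^n \cdot (C(u)/C(1))(1+o(1))$, and the Quasi-Powers Theorem (Hwang; see \cite[Theorem~IX.8]{Flajolet-Sedgewick:ta:analy}) then yields a Gaussian limit law with mean $\sim \mu_r n$ and variance $\sim \sigma_r^2 n$, \emph{provided} the variance constant $\sigma_r^2 = -(\log(\rho/\rho(u)))''|_{u=1}$ (the second term being $\rho''(1)/\rho + (\rho'(1)/\rho)^2$ up to sign) is strictly positive. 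The constants $\mu_r$, $\sigma_r^2$ obtained this way must coincide with those already computed in~\eqref{eq:simply-generated:constants}, since they are determined by the same first two derivatives at $u=1$.

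**Handling the degeneracy.**
The only thing that can go wrong is $\sigma_r^2 = 0$, in which case the Quasi-Powers Theorem does not conclude normality and indeed the limit law is typically degenerate or of a different nature. So the core of the argument is to show $\sigma_r^2 > 0$ for every simply generated family and every $r\ge 1$ \emph{except} $d$-ary trees with $r=1$, and to identify what happens in that exceptional case. For the exceptional case: if $\mathcal{T}$ is the family of $d$-ary trees ($w_0=w_d=1$, all other $w_j=0$) and $r=1$, then $\mathcal{T}_1$ consists of the single one-vertex tree, so $F_1(x)=x$, $F_1(\rho)=\rho$, $F_1'(\rho)=1$; moreover $a_1(\tree)$ counts the leaves of $\tree$, and for a $d$-ary tree with $n$ vertices the number of leaves is deterministically $(n(d-1)+1)/d$. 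Hence $a_1(\tree_n)$ has zero variance, $\sigma_1^2=0$, and no Gaussian law (the distribution is concentrated). One should check that plugging $F_r(\rho)=\rho$, $F_r'(\rho)=1$ and the $d$-ary values $\tau$, $\rho$, $\alpha$ into~\eqref{eq:simply-generated:constants} indeed gives $\sigma_1^2=0$, confirming consistency. This case is genuinely excluded from the statement.

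**Positivity of the variance — the main obstacle.**
The hard part is proving $\sigma_r^2 > 0$ in all remaining cases. Proposition~\ref{prop:simply-generated:constants-asy} already gives $\sigma_r^2 \to 1/(3\rho\tau\Phi''(\tau)) > 0$, so positivity holds for all sufficiently large $r$; the issue is the finitely many small values of $r$ for an arbitrary weight sequence. The clean way to settle this is the standard criterion: in a perturbed-singularity quasi-power scheme, $\sigma_r^2 = 0$ forces $a_r(\tree_n)$ to be (asymptotically) a deterministic affine function of $n$, which for a combinatorial parameter on simply generated trees means $a_r(\tree)$ must be an affine function of $|\tree|$ identically on the support of the family (a rigidity statement; cf.\ the analogous dichotomy for additive parameters in \cite{Wagner:2015:centr-limit}). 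So it suffices to show: $a_r$ is a linear combination of $|\tree|$ and a constant for all trees of positive weight in $\mathcal{T}$ only when $\mathcal{T}$ is $d$-ary and $r=1$. Since our assumptions guarantee a $k>1$ with $w_k>0$ (so the family is genuinely branching), one exhibits, for any $r\ge 1$ and any non-$d$-ary-with-$r=1$ situation, two trees of the same size $n$ (for infinitely many $n$) with different values of $a_r$: for $r\ge 2$ this is easy because one can locally modify a tree below height $r$ versus above it while preserving the vertex count and changing the number of vertices of height $<r$; for $r=1$ it reduces to showing the number of leaves is not determined by $n$ unless the family is $d$-ary, which follows from the existence of two allowed out-degrees whose "defect" from a common value differs, forcing two trees of equal size with different leaf counts. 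Carrying out this combinatorial case analysis cleanly — and invoking the rigidity lemma in the right form — is the delicate point; the analytic machinery (perturbed square-root singularity $+$ Quasi-Powers) is entirely standard once positivity of $\sigma_r^2$ is in hand.
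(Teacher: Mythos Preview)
Your analytic framework---perturbed square-root singularity plus Quasi-Powers---is exactly what the paper uses (via \cite[Theorem~2.23]{Drmota:2009:random} and Hwang's quantified version for the $O(n^{-1/2})$ rate), so that part matches. The substantive difference lies in how $\sigma_r^2>0$ is established.

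You reduce positivity to a rigidity dichotomy: $\sigma_r^2=0$ should force $a_r(\tree)$ to be an affine function of $|\tree|$ on the support of the family, whence exhibiting two equal-size trees with different $a_r$-values suffices. This is a reasonable route, but the implication you need---from $\V a_r(\tree_n)=O(1)$ to \emph{exact} affinity on all trees of positive weight---is not automatic and requires a precise statement; your pointer to \cite{Wagner:2015:centr-limit} is in the right direction, but you would have to verify that a result there applies in the form you invoke.

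The paper bypasses this rigidity step. It too begins by exhibiting trees $\tree^1,\tree^2$ of equal size with $a_r(\tree^1)\neq a_r(\tree^2)$ (and, crucially, both of height at least $r$, so that swapping one for the other as a fringe subtree changes $a_r$ of the ambient tree by exactly $a_r(\tree^1)-a_r(\tree^2)$; your sketch omits this condition). But then it argues \emph{probabilistically}: by fringe-distribution results (\cite{Aldous:1991:asy-fringe-distributions}, \cite{Janson:2016:normality-add-func}), a random tree of size $n$ contains $\Theta(n)$ fringe copies of $\tree^1$ or $\tree^2$ with probability $>1/2$; conditioning on the shape of the tree with these fringe copies collapsed to marked vertices, the number of $\tree^1$-copies among the $m$ marks is binomial with a fixed success probability, and the law of total variance yields $\V a_r(\tree_n)\ge c\,n$ directly. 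This gives a self-contained linear lower bound without any classification of degenerate additive functionals.

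Your combinatorial constructions are also vaguer than the paper's: for $r\ge 2$ the paper takes $\tree^1$ to be the complete $d$-ary tree of height $r$ and $\tree^2$ the caterpillar with the same number of internal vertices (handshaking fixes the leaf count and hence the size), which makes both the equal-size and the height-$\ge r$ conditions transparent.
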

\begin{proof}
  Observe that as soon as we are able to prove that the variance of $a_{r}(\tree_{n})$ is
  actually linear with respect to $n$, i.e., $\sigma_{r}^{2} \neq 0$, all conditions
  of~\cite[Theorem~2.23]{Drmota:2009:random} hold and are checked easily, thus proving a
  Gaussian limit law.

  Our strategy for proving a linear lower bound for the variance relies on choosing two
  trees $\tree^{1}$, $\tree^{2}\in\mathcal{T}$ with $\abs{\tree^{1}} = \abs{\tree^{2}}$
  such that $a_{r}(\tree^{1}) \neq a_{r}(\tree^{2})$, and $a_{r}(\tree^{1}), a_{r}(\tree^{2}) < \abs{\tree^{1}}$
(i.e., neither of the two is completely reduced after $r$ steps, and the number of vertices removed
after $r$ steps differs between $\tree^{1}$ and  $\tree^{2}$).
  While this is not possible in the
  case where $r=1$ and $\mathcal{T}$ is a family of $d$-ary trees (where the number of
  leaves, and thus the number of removed nodes when cutting the tree once only depends 
  on the tree size), such trees can always be found in all other cases. To be
  more precise, if $r = 1$ and $\mathcal{T}$ is not a $d$-ary family of trees, there have
  to be at least three different possibilities for the number of children, namely $0$, $d$, and $e$.
  Then, in a sufficiently large tree $\tree^{1}$, the number of inner nodes with $d$
  children can be reduced by $e$, and the number of inner nodes with $e$ children can be
  increased by $d$ in order to obtain a tree $\tree^{2}$ of the same size with a different
  number of leaves, thus satisfying our conditions.

  For the case $r\geq 2$, observe that the problem above cannot arise as cutting the leaves
  off some tree in $\mathcal{T}$ does not necessarily yield another tree in $\mathcal{T}$.
  Let $d$ be a positive integer for which the weight $w_{d}$ is positive (i.e., a node in
  a tree in $\mathcal{T}$ can have $d$ children). We choose $\tree^{1}$ to be the
  complete $d$-ary tree of height $r$. A second $d$-ary tree $\tree^{2}$ is then
  constructed by arranging the same number of internal vertices as a path and by attaching
  suitably many leaves. The handshaking lemma then guarantees that both trees have the
  same size; but $a_{r}(\tree^{1})$ is obviously larger than $a_{r}(\tree^{2})$.

  It is well-known (see e.g.~\cite{Aldous:1991:asy-fringe-distributions}
  or~\cite{Janson:2016:normality-add-func} for stronger results) that large trees (except
  for a negligible proportion) contain a linear (with respect to the size of the tree) number
  of copies of $\tree^{1}$ and $\tree^{2}$ as fringe subtrees. To be more precise, this
  means that there is a positive constant $c > 0$ such that the probability that a tree of
  size $n$ contains at least $cn$ copies of the patterns $\tree^{1}$ and $\tree^{2}$ is
  greater than $1/2$.

  Now, consider a large random
  tree $\tree$ in $\mathcal{T}$ and replace all occurrences of $\tree^{1}$ and $\tree^{2}$ by
  marked vertices. If $m$ denotes the number of marked vertices in the corresponding tree,
  then $m$ is of linear size with respect to the tree size $n$, except for a negligible
  proportion of trees. Given that, after replacing the patterns, the remaining tree contains $m$
  marked nodes, the number of occurrences of $\tree^{1}$ in the original (random) tree
  follows a binomial distribution with size parameter $m$ and probability $p =
  w(\tree^{1})/(w(\tree^{1})+w(\tree^{2})) \in (0,1)$ that only depends on the weights of
  the patterns. If we let $c_1$ and $c_2$ be the number of occurrences of $\tree^{1}$ and
$\tree^{2}$ respectively, then we have
$$a_{r}(\tree) = c_1a_{r}(\tree^{1}) + c_2a_{r}(\tree^{2}) + A,$$
where $A$ only depends on the shape of the reduced tree with $\tree^{1}$ and $\tree^{2}$
replaced by marked vertices. Let $M$ denote the random variable modeling the number of marked nodes
in the reduced tree obtained from a tree $\tree$ of size $n$. Then, via the law of total variance we find
  \[
    \V a_{r}(\tree_{n}) \geq \E(\V(a_{r}(\tree_{n}) | M)) \geq p(1-p)\E M \geq
    p(1-p)\frac{c}{2} n.
  \]
  The last inequality can be justified via the law of total
  expectation combined with the fact that the number of marked nodes in a tree with
  replaced patterns is at least $cn$ with probability greater than $1/2$. This
  proves that the variance of $a_{r}(\tree_{n})$ has to be of linear order.

  Finally, in order to prove that the speed of convergence is
  $O(n^{-1/2})$, we replace the formulation of Hwang's Quasi-Power Theorem without
  quantification of the speed of convergence (cf.~\cite[Theorem~2.22]{Drmota:2009:random})
  in the proof of~\cite[Theorem~2.23]{Drmota:2009:random} with a quantified version
  (see~\cite{Hwang:1998} or~\cite{Heuberger-Kropf:2016:higher-dimen} for a generalization
  to higher dimensions).
\end{proof}

The following theorem summarizes the results of the asymptotic analysis in this section.

\begin{theorem}\label{thm:simply-generated}
  Let $r\in\Z_{\geq 1}$ be fixed and $\mathcal{T}$ be a simply generated family of trees
  with weight generating function $\Phi$ and fundamental constant $\tau$, and set $\rho =
  \tau/\Phi(\tau)$. If $\tree_n$ denotes a random tree from $\mathcal{T}$ of size $n$ (with
  respect to the probability measure defined on $\mathcal{T}$), then for $n\to\infty$ the
  expected number of removed nodes when applying the ``cutting leaves'' procedure $r$
  times to $\tree_n$ and the corresponding variance satisfy
  \begin{equation*}
    \E a_r(\tree_n) = \mu_r n + \frac{\rho \tau^2 F_r'(\rho) +
      3\beta\tau F_r(\rho) - \alpha^2 F_r(\rho)}{2\tau^3} + O(n^{-1}),
    \quad\text{ and }\quad
    \V a_r(\tree_n) = \sigma_r^2 n + O(1).
  \end{equation*}
  The constants $\mu_r$ and $\sigma_r^2$ are given by
  \begin{equation*}
    \mu_r = \frac{F_r(\rho)}{\tau},\qquad \sigma_r^2 = \frac{4 \rho \tau^3 F_r'(\rho)
      - 4 \rho \tau^2 F_r(\rho)F_r'(\rho) + (2\tau^2 - \alpha^2) F_r(\rho)^2 - 2\tau^3 F_r(\rho)}{2\tau^4},
  \end{equation*}
  with
  \[ \alpha = \sqrt{\frac{2\tau}{\rho \Phi''(\tau)}}, \qquad
    \beta = \frac{1}{\rho\Phi''(\tau)} -
    \frac{\tau \Phi'''(\tau)}{3\rho \Phi''(\tau)^2}. \]
  Furthermore, for $r\to\infty$ the constants $\mu_r$ and $\sigma_r^2$ behave like
  \begin{equation*}
    \mu_r = 1 - \frac{2}{\rho\tau\Phi''(\tau)} r^{-1} + o(r^{-1}) \quad\text{ and }\quad
    \sigma_r^2 = \frac{1}{3\rho\tau\Phi''(\tau)} + o(1).
  \end{equation*}
  Finally, if $r \geq 2$ or $\mathcal{T}$ is not a family of $d$-ary trees, then
  $a_r(\tree_n)$ is asymptotically normally distributed, meaning that for $x\in\R$ we have
  \[ \P\Big(\frac{a_{r}(\tree_{n}) - \mu_{r}n}{\sqrt{\sigma_{r}^{2} n}} \leq x\Big) =
    \frac{1}{\sqrt{2\pi}} \int_{-\infty}^{x} e^{-t^{2}/2}~dt + O(n^{-1/2}). \]
\end{theorem}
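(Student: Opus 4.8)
\emph{Proof plan.} The statement is essentially a consolidation of the three results established earlier in this section, so the plan is simply to assemble them in the right order and check that their hypotheses are compatible. First I would invoke Proposition~\ref{prop:simply-generated:functional-equation} to fix the functional equation~\eqref{eq:simply-generated:functional-equation} as the starting point; this is the only structural input, and everything else is extracted from it analytically. From there, the asymptotic formulas for $\E a_r(\tree_n)$ and $\V a_r(\tree_n)$, together with the explicit expressions for $\mu_r$, $\sigma_r^2$, $\alpha$, and $\beta$, are exactly the content of the (unnumbered) Proposition preceding Proposition~\ref{prop:simply-generated:constants-asy}, whose proof proceeds by implicit differentiation of~\eqref{eq:simply-generated:functional-equation} with respect to $u$ at $u=1$, singular-expansion of $F$ at $\rho$ via~\eqref{eq:simply-generated:F-expansion}, the observation that $F_r$ is analytic at $\rho$ (hence admits a Taylor expansion there), and singularity analysis.

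Next I would cite Proposition~\ref{prop:simply-generated:constants-asy} verbatim for the $r\to\infty$ behavior~\eqref{eq:simply-generated:constants-asy} of $\mu_r$ and $\sigma_r^2$; recall that this is obtained by studying the recursions $c_r = \rho\Phi(c_{r-1})$ and $d_r = \rho\Phi'(c_{r-1})d_{r-1} + c_r/\rho$ for $c_r = F_r(\rho)$, $d_r = F_r'(\rho)$, Taylor-expanding around the fixed point $\tau$, and using $\rho\Phi'(\tau)=1$. Finally, the Gaussian limit law with error term $O(n^{-1/2})$ is precisely Proposition~\ref{prop:simply-generated:limit-law}, whose proof verifies the hypotheses of the quasi-power framework \cite[Theorem~2.23]{Drmota:2009:random} — in particular the linear lower bound on the variance, established by a pattern-replacement/law-of-total-variance argument using two equal-sized trees $\tree^1,\tree^2$ with $a_r(\tree^1)\neq a_r(\tree^2)$ — and which singles out the $d$-ary, $r=1$ case as the unique obstruction.

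Since all four ingredients are already in hand, the only thing that genuinely needs attention in writing the proof is bookkeeping: confirming that the standing assumptions (the existence and uniqueness of the fundamental constant $\tau$ with $0<\tau<R$, the aperiodicity assumption ensuring the unique square-root singularity of $F$ at $\rho=\tau/\Phi(\tau)=1/\Phi'(\tau)$, and $w_0=1$ together with some $w_k>0$ for $k>1$) are shared by all three propositions, and that the exceptional case in the last clause matches the exclusion in Proposition~\ref{prop:simply-generated:limit-law}. Thus the expected main ``obstacle'' is not mathematical but expository — namely presenting the synthesis cleanly — and the proof reduces to: ``This follows by combining Propositions~\ref{prop:simply-generated:functional-equation} and~\ref{prop:simply-generated:constants-asy}, the intermediate proposition on mean and variance, and Proposition~\ref{prop:simply-generated:limit-law}.''
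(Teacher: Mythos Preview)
Your proposal is correct and matches the paper exactly: the paper does not give a separate proof of this theorem at all, introducing it with ``The following theorem summarizes the results of the asymptotic analysis in this section,'' so the intended proof is precisely the synthesis you describe. One minor quibble: the proposition you call ``unnumbered'' is in fact numbered (it simply lacks a \verb|\label|), so in the written version you should refer to it by its number rather than as unnumbered.
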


\section{Outlook}\label{sec:outlook}

Our approach for analyzing the ``cutting leaves'' reduction procedure on simply generated
families of trees can be adapted to work for other families of trees as well. In this
section, we describe two additional classes of rooted trees to which our approach is
applicable and give qualitative results. Details on the analysis for these classes as well
as quantitative results will be given in the full version of this extended abstract.

The two additional classes of trees are \emph{P\'olya trees} and \emph{noncrossing
  trees}. P\'olya trees are unlabeled rooted trees where the ordering of the children is
not relevant. Uncrossing trees, on the other hand, are special labeled trees that satisfy
two conditions:
\begin{itemize}
\item[--] the root node has label $1$,
\item[--] when arranging the vertices in a circle such that the labels are sequentially
  ordered, none of the edges of the tree are crossing.
\end{itemize}
Obviously, noncrossing trees have their name from the second property. Both classes of
trees, P\'olya trees as well as noncrossing trees, are illustrated in
Figure~\ref{fig:more-tree-classes}.

\begin{figure}[ht]
  \centering
  \begin{subfigure}[b]{0.6\linewidth}
    \centering
    \begin{tikzpicture}[scale=0.7, inner sep=2.5pt]
      \node[draw, circle] {}
      child {node[draw, circle] {}
        child {node[draw, circle] {}}
        child {node[draw, circle] {}}
      }
      child {node[draw, circle] {}}
      child {node[draw, circle] {}
        child {node[draw, circle] {}}
        child {node[draw, circle] {}
          child {node[draw, circle] {}}
          child {node[draw, circle] {}
            child {node[draw, circle] {}}
          }
        }
        child {node[draw, circle] {}}
      };
    \end{tikzpicture}\hspace{1cm}
    \begin{tikzpicture}[scale=0.7, inner sep=2.5pt,
      level 2/.style={sibling distance=0.62cm}]
      \node[draw, circle] {}
      child {node[draw, circle] {}
        child {node[draw, circle] {}
          child {node[draw, circle] {}
            child {node[draw, circle] {}}
          }
          child {node[draw, circle] {}}
        }
        child {node[draw, circle] {}}
        child {node[draw, circle] {}}
      }
      child {node[draw, circle] {}
        child {node[draw, circle] {}}
        child {node[draw, circle] {}}
      }
      child {node[draw, circle] {}};
    \end{tikzpicture}
    \subcaption{Two embeddings of a given P\'olya tree}
  \end{subfigure}
  \begin{subfigure}[b]{0.35\linewidth}
    \centering
    \begin{tikzpicture}[scale=0.7, inner sep=2pt]
      \node[draw, circle] (1) at (90:2.5) {$1$};
      \node[draw, circle] (2) at (130:2.5) {$2$};
      \node[draw, circle] (3) at (170:2.5) {$3$};
      \node[draw, circle] (4) at (210:2.5) {$4$};
      \node[draw, circle] (5) at (250:2.5) {$5$};
      \node[draw, circle] (6) at (290:2.5) {$6$};
      \node[draw, circle] (7) at (330:2.5) {$7$};
      \node[draw, circle] (8) at (10:2.5) {$8$};
      \node[draw, circle] (9) at (50:2.5) {$9$};
      \draw (1) -- (5) -- (3) -- (2) (3) -- (4)
            (5) -- (9) -- (6) -- (7) (6) -- (8);
    \end{tikzpicture}
    \subcaption{A noncrossing tree of size $9$}
  \end{subfigure}
  \caption{P\'olya trees and noncrossing trees}
  \label{fig:more-tree-classes}
\end{figure}
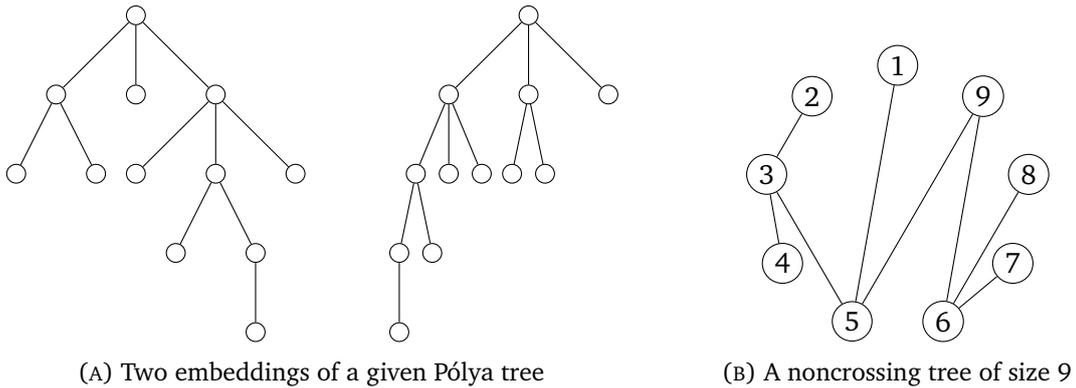

The basic principle in the analysis of both of these tree classes is the same: we leverage
the recursive nature of the respective family of trees to derive a functional equation for
$A_{r}(x,u)$. From there, similar techniques as in
Section~\ref{sec:simply-generated:analysis} (i.e., implicit differentiation and
propagation of the singular expansion of the basic generating function $F(x)$) can be used
to obtain (arbitrarily precise) asymptotic expansions for the mean and the variance of the
number of deleted nodes when cutting the tree $r$ times.

Qualitatively, in both of these cases we can prove a theorem of the following nature.

\begin{theorem}\label{thm:outlook:qualitative}
  Let $r\in\Z_{\geq 1}$ be fixed and $\mathcal{T}$ be either the family of P\'olya trees
  or the family of noncrossing trees. If $\tree_{n}$ denotes a uniformly random tree from
  $\mathcal{T}$ of size $n$, then for $n\to\infty$ the expected number of removed nodes
  when applying the ``cutting leaves'' procedure $r$ times to $\tree_{n}$ and the
  corresponding variance satisfy
  \[
    \E a_{r}(\tree_{n}) = \mu_{r} n + O(1),\quad \text{ and }\quad
    \V a_{r}(\tree_{n}) = \sigma_{r}^{2} n + O(1),
  \]
  for explicitly known constants $\mu_{r}$ and $\sigma_{r}^{2}$. Furthermore, the number
  of deleted nodes $a_{r}(\tree_{n})$ admits a Gaussian limit law.
\end{theorem}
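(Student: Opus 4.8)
The plan is to run, for each of the two families, the same three-part strategy that worked for simply generated trees: (i) set up a bivariate functional equation (or system) for $A_r(x,u)$ from the recursive structure of the family, (ii) differentiate it once and twice with respect to $u$ at $u=1$ and apply singularity analysis to read off mean and variance, and (iii) obtain the Gaussian limit law from a quasi-power argument together with a combinatorial lower bound on the variance. A more economical alternative for the extended abstract is to note that $a_r$ is an additive functional whose toll function $f_r=\iverson{\tree\in\mathcal{T}_r}$ from \eqref{eq:toll-function} is bounded ($0$ or $1$) and supported on the finite set of trees of height less than $r$; the general central limit theorems for additive functionals of P\'olya trees (and of recursively specified families) in \cite{Wagner:2015:centr-limit} and \cite{Janson:2016:normality-add-func} then apply essentially verbatim, giving $\E a_r(\tree_n)=\mu_r n+O(1)$, $\V a_r(\tree_n)=\sigma_r^2 n+O(1)$ and asymptotic normality with $\mu_r,\sigma_r^2$ expressed there as sums of toll-function values over fringe subtrees. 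I would present both and use the self-contained version to make the constants explicit.

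For the self-contained version, the first step is the functional equation. For P\'olya trees the fringe-subtree decomposition is ``root plus a multiset of P\'olya trees'', so the bookkeeping from the proof of Proposition~\ref{prop:simply-generated:functional-equation} (count every tree as if its root were removed, then correct the trees of height less than $r$ by a factor $u$) yields
\[
  A_r(x,u) = x\exp\Big(\sum_{i\geq 1}\frac{1}{i}A_r(x^i,u^i)\Big) + \Big(1-\frac{1}{u}\Big)F_r(xu),
\]
with $F_r(x)=x\exp\big(\sum_{i\geq 1}\frac{1}{i}F_{r-1}(x^i)\big)$ and $F_1(x)=x$, in perfect analogy with \eqref{eq:simply-generated:functional-equation} and \eqref{eq:iteration}. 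For noncrossing trees one uses the standard ``butterfly'' decomposition of a noncrossing tree rooted at its least vertex (see \cite{Flajolet-Sedgewick:ta:analy}), which produces a polynomial system for the ordinary generating function; inserting the variable $u$ tracking $a_r$ together with the same height-less-than-$r$ correction produces a corresponding bivariate system for $A_r$. In both cases $A_r(x,1)=F(x)$, and an induction on $r$ along the $F_r$-recursion (as in the argument following \eqref{eq:iteration} in Section~\ref{sec:simply-generated:analysis}) shows that $F_r$ is analytic at the dominant singularity $\rho$ of $F$, since $F_{r-1}(\rho)<F(\rho)$ stays strictly inside the region where the relevant operator is analytic.

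The second step is then routine. $F$ has a square-root singularity at $\rho$ of the form \eqref{eq:simply-generated:F-expansion} --- classical for noncrossing trees, and classical for P\'olya trees once one observes that the terms $A_r(x^i,u^i)$ with $i\geq 2$ are analytic in a disc of radius larger than $\rho$ and therefore perturb the singular equation only analytically, so the smooth implicit-function schema still applies. Differentiating the functional equation once and twice with respect to $u$ and setting $u=1$ expresses $\frac{\partial}{\partial u}A_r(x,u)|_{u=1}$ and $\frac{\partial^2}{\partial u^2}A_r(x,u)|_{u=1}$ in terms of $F,F',F'',F_r,F_r'$ near $\rho$; singularity analysis of these, normalized by $[x^n]F(x)$, gives the claimed asymptotics for $\E a_r(\tree_n)$ and, via $\V a_r(\tree_n)=\E a_r(\tree_n)^{\underline{2}}+\E a_r(\tree_n)-(\E a_r(\tree_n))^2$, for the variance, with $\mu_r$ and $\sigma_r^2$ emerging as explicit rational expressions in $\rho$, the limiting value of $F$ at $\rho$, and the singular coefficients of $F$ (and, for noncrossing trees, of the auxiliary series in the system).

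The third step, and the main obstacle, is the Gaussian limit law. Positivity $\sigma_r^2>0$ is proved exactly as in Proposition~\ref{prop:simply-generated:limit-law}: exhibit trees $\tree^{1},\tree^{2}\in\mathcal{T}$ of equal size with $a_r(\tree^{1})\neq a_r(\tree^{2})$ and $a_r(\tree^{i})<\abs{\tree^{i}}$, swap occurrences of $\tree^{1}$ and $\tree^{2}$ inside a large random tree, and apply the law of total variance. For P\'olya trees and noncrossing trees such patterns always exist --- already for $r=1$, since both families contain, for every large enough size, trees with different numbers of leaves (e.g.\ a path versus a bushier tree) --- so none of the exceptional behaviour of the $d$-ary case arises. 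For the distributional statement the cleanest route is Hwang's quasi-power theorem: one shows that, as a function of $x$, $A_r(x,u)$ has a movable square-root singularity $\rho(u)$ depending analytically on $u$ near $u=1$ with $\rho(1)=\rho$ --- for noncrossing trees via the implicit-function/Drmota--Lalley--Woods machinery for positive polynomial systems, and for P\'olya trees via the implicit-function schema after absorbing the analytic $i\geq 2$ tail --- so that $[x^n]A_r(x,u)/[x^n]A_r(x,1)$ is a quasi-power and the quantified version of Hwang's theorem (\cite{Hwang:1998}, or \cite{Heuberger-Kropf:2016:higher-dimen}) gives the Berry--Esseen bound $O(n^{-1/2})$. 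The technical heart is precisely this uniform analytic dependence of $\rho(u)$ on $u$ near $u=1$ --- delicate for P\'olya trees because of the infinitely many terms $A_r(x^i,u^i)$, and for noncrossing trees because one deals with a system rather than a single equation --- but in both cases it is a now-standard, if somewhat laborious, application of the implicit function theorem within the quasi-power framework already invoked in \cite{Drmota:2009:random}.
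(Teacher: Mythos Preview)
Your proposal is correct and, for its primary line of argument, follows precisely the route the paper sketches: the paper does not actually prove Theorem~\ref{thm:outlook:qualitative} in this extended abstract but only announces that one derives a functional equation (or system) for $A_r(x,u)$ from the recursive structure of P\'olya trees respectively noncrossing trees, differentiates at $u=1$, propagates the square-root singular expansion of $F$, and obtains the limit law by the same quasi-power machinery as in Section~\ref{sec:simply-generated:analysis}. Your write-up fleshes this out faithfully, including the correct P\'olya-tree functional equation and the observation that for noncrossing trees one works with a polynomial system so that Drmota--Lalley--Woods applies. Your alternative route via the general central limit theorems for bounded additive functionals (\cite{Wagner:2015:centr-limit}, \cite{Janson:2016:normality-add-func}) is a genuinely different and more economical argument that the paper does not mention; it buys you asymptotic normality and the linear mean/variance essentially for free, at the cost of less explicit constants, whereas the paper's functional-equation approach is heavier but produces the explicit formulas for $\mu_r$ and $\sigma_r^2$ promised in the statement. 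One small slip: the toll function $f_r=\iverson{\tree\in\mathcal{T}_r}$ is bounded but not finitely supported (for $r\geq 2$ there are infinitely many trees of height less than $r$); this does not affect your argument, since boundedness is what the cited CLTs require, but you should not call $\mathcal{T}_r$ finite.
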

Note that more precise asymptotic expansions for the mean and the variance (with
explicitly known constants) can also be computed.

\bibliography{bib/cheub}
\bibliographystyle{bibstyle/amsplainurl}

\end{document}